\newtheorem{theorem}[subsection]{Theorem}
\newtheorem{lemma}[subsection]{Lemma}
\theoremstyle{definition}
\newtheorem{example}[subsection]{Example}
\newcommand{\haus}{\mathcal{H}}
\newcommand{\leb}{\mathcal{L}}
\newcommand{\spt}{\mathrm{spt}}
\newcommand{\graph}{\mathrm{graph}}
\newcommand{\eps}{\epsilon}
\newcommand{\R}{\mathbb{R}}
\newcommand{\del}{\partial}
\newcommand{\cI}{\mathcal{I}}
\begin{document}

\title[Bernstein Type Theorem]{A Bernstein-type theorem for minimal graphs over convex domains}
\author[Nick Edelen]{Nick Edelen}
\address{Department of Mathematics\\
University of Notre Dame\\
Notre Dame, IN 46556, USA} \email{nedelen@nd.edu}
\author[Zhehui Wang]{Zhehui Wang}
\address{Beijing International Center for Mathematical Research\\
Peking University\\
Beijing, 100871, China}
\email{wangzhehui@pku.edu.cn}
\maketitle

\begin{abstract}
Given any $n \geq 2$, we show that if $\Omega \subsetneq \R^n$ is an open convex domain (e.g. a half-space), and $u : \Omega \to \R$ is a solution to the minimal surface equation which agrees with a linear function on $\del \Omega$, then $u$ must itself be linear.
\end{abstract}

\section{Introduction}

The classical Bernstein theorem asserts that any entire minimal graph over $\R^n$ with $2\leq n\leq 7$ is a hyperplane. This was originally proven when $n=2$ by Bernstein \cite{Bern1015},  $n=3$ by De Giorgi \cite{DeGiorgi1965},  $n=4$ by Almgren \cite{Alm1966}, and $n=5, 6, 7$ by Simons \cite{Simons1968}.  For $n\geq 8$ there were counter examples constructed by Bombieri, De Giorgi and Giusti \cite{BGG1969}.

Here we prove a Bernstein-type theorem for minimal graphs with Dirichlet boundary conditions in a convex subdomain of $\R^n$.
\begin{theorem}\label{thm:main1}
Let $\Omega \subsetneq \R^n$ be an open, convex subset of $\R^n$.  Suppose $u \in C^2(\Omega) \cap C^0(\overline{\Omega})$ solves
\[
\sum_i D_i \left( \frac{ D_i u}{\sqrt{1+|Du|^2}} \right) = 0 \text{ in } \Omega, \quad u = l \text{ on } \del\Omega
\]
for some linear function $l : \R^n \to \R$.  Then $u$ is linear, and if $\Omega$ is not a half-space, then in fact $u = l$.
\end{theorem}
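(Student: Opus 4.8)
The plan is to compare $u$ with affine functions lying just above or just below $l$ on $\overline{\Omega}$; the role of convexity is to furnish both enough such affine functions and the barriers needed to run such a comparison on an unbounded domain.

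\emph{Reduction to unbounded $\Omega$.} If $\Omega$ is bounded then $l$ solves the same Dirichlet problem as $u$, so $w := u-l$ solves a linear, uniformly elliptic, divergence-form equation with no zeroth-order term — extracted from $0=\mathcal{M}u-\mathcal{M}l=\int_0^1\tfrac{d}{dt}\mathcal{M}(l+tw)\,dt$ — and vanishes on $\partial\Omega$, so $u=l$ by the maximum principle; a bounded convex set is not a half-space, so this is consistent. Assume henceforth $\Omega$ is unbounded. In general $w=u-l$ still solves such an equation, now only locally uniformly elliptic, with $w=0$ on $\partial\Omega$, and the Jenkins--Serrin barrier theory for convex domains gives a priori $C^1$ control of $u$ up to $\partial\Omega$.

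\emph{The comparison claim and how it finishes the proof.} The crux is: \emph{if $v$ is affine with $v\ge l$ on $\overline{\Omega}$ then $u\le v$ on $\Omega$, and if $v$ is affine with $v\le l$ on $\overline{\Omega}$ then $u\ge v$ on $\Omega$.} Granting it, pick $x_0\in\partial\Omega$ with outward normal $\nu_0$, so that $\langle\,\cdot-x_0,\nu_0\rangle\le0$ on $\overline{\Omega}$; then $v^{\pm}_\eps:=l\mp\eps\langle\,\cdot-x_0,\nu_0\rangle$ satisfy $v^{-}_\eps\le l\le v^{+}_\eps$ on $\overline{\Omega}$ and $v^{\pm}_\eps\to l$ locally uniformly, so the claim forces $v^{-}_\eps\le u\le v^{+}_\eps$, and $\eps\downarrow0$ gives $u=l$. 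This argument uses nothing about $\Omega$ beyond $\Omega\ne\R^n$ — which is exactly why the comparison claim must \emph{fail} for a half-space (there $u=l$ can be false, since $\phi+c\,x_n$ solves $\mathcal{M}=0$ for every $c$). In the half-space case $\Omega=\{x_n>0\}$ with $u=\phi$ on $\{x_n=0\}$ ($\phi$ affine in $x'=(x_1,\dots,x_{n-1})$) one argues instead via a blow-down: the rescalings $R^{-1}(u(R\,\cdot)-\phi(0))$ subconverge — using interior and boundary gradient estimates — to a degree-one homogeneous solution over $\{x_n>0\}$ with linear boundary data, whose graph is a Lipschitz minimal cone, hence a hyperplane; this identifies a slope $c_0$ with $u-(\phi+c_0x_n)=o(|x|)$, and then, since $u-(\phi+c_0x_n)$ solves a linear elliptic equation, vanishes on $\{x_n=0\}$, and is sublinear, a Phragm\'en--Lindel\"of argument gives $u=\phi+c_0x_n$, linear.

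\emph{The comparison claim — the main obstacle.} Since $\Omega$ is unbounded the ordinary comparison principle does not apply, and as noted the claim is genuinely false for half-spaces, so its proof must use that $\Omega$ is \emph{not} a half-space. The relevant feature is that $\Omega$ then has supporting hyperplanes of at least two distinct directions, so its recession cone is strictly thinner than a half-space — contained in a wedge of opening $<\pi$, or in a lower-dimensional subspace. This permits a Phragm\'en--Lindel\"of principle: exhaust $\Omega$ by the convex bodies $\Omega\cap B_R$, and bound $u-v$ on the spherical caps $\Omega\cap\partial B_R$ by a positive supersolution of the pertinent linearized operator that grows more slowly than $R$ along those caps (exponentially for slab-like recession cones, with power rate $>1$ for wedge-like ones), built from the convexity of $\Omega$. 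One also needs an a priori growth bound $u=O(|x|)$, obtainable from convexity-based barriers or from the minimizing property of the subgraph of $u$ together with interior gradient estimates. Carrying out these barrier constructions uniformly over all non-half-space unbounded convex $\Omega$ — and reconciling them with the degenerate ellipticity of the linearized minimal surface operator — is the technical heart of the argument.
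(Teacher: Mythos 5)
Your high-level picture — compare $u$ with tilted affine functions, and in the half-space case classify a blow-down — is the right intuition, and it parallels the paper's strategy of rotating half-hyperplanes as barriers. But as written there are two genuine gaps, both stemming from the same issue: on an unbounded domain you have no a priori bound on $|Du|$, and every analytic tool you invoke quietly assumes one. The ``comparison claim'' is the entire theorem for non-half-space $\Omega$ (indeed, as you observe, it is false for half-spaces, so no soft argument can give it), and you leave its proof as ``the technical heart'': the Phragm\'en--Lindel\"of supersolutions you describe must be supersolutions of the operator $a_{ij}(Du, Dl)D_{ij}$ obtained by linearizing along the segment from $l$ to $u$, whose ellipticity ratio degenerates like $1+|Du|^2$; without first proving $Du$ is bounded (which is not known a priori, and for $n\ge 8$ minimal graphs over unbounded domains genuinely can have unbounded gradient), the barrier comparison on the caps $\Omega\cap\partial B_R$ does not close. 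The preliminary claims feeding into this — the boundary $C^1$ control ``from Jenkins--Serrin theory'' and the growth bound $u=O(|x|)$ — are themselves circular on an unbounded domain, since the standard boundary gradient estimate presupposes a sup bound on $u$, which is what you are trying to prove.

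The half-space case has the same problem plus an unproved classification. To extract a subsequential limit of $R^{-1}(u(Rx)-\phi(0))$ as a graph you need uniform Lipschitz bounds on the rescalings, i.e.\ again an a priori gradient bound; and the limit object is a minimal cone that is a graph over the \emph{half-space} $\{x_n>0\}$ with linear boundary data, not an entire graph, so ``Lipschitz minimal cone, hence a hyperplane'' is not a standard fact — it is precisely the paper's Lemma \ref{lem:plane-bern} (a special case of Hardt--Simon \cite{HS79}), proved there by induction on dimension, a rotating-plane argument, the Hopf lemma at the regular boundary, and the Solomon--White strong maximum principle. The paper avoids all of the gradient-estimate difficulties by never working at the PDE level on the unbounded domain: Lemma \ref{lem:graph-minz} upgrades the graph of $u$ to a mass-minimizing current in all of $\R^{n+1}$ with the structure $\del[E]-[G]$ and polynomial mass bounds (using only the calibration argument and convexity of $\Omega$ via the $1$-Lipschitz nearest-point projection), after which compactness of minimizing currents, monotonicity, and the varifold maximum principle replace every Phragm\'en--Lindel\"of and gradient estimate you would otherwise need. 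To repair your argument you would either have to carry out the degenerate-elliptic barrier constructions and prove the cone classification from scratch, or make the same passage to geometric measure theory that the paper does.
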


We emphasize that unlike the entire case, we require no dimensional restriction.  This is due to the very rigid nature of area-minimizing hypersurfaces-with-boundary contained in a convex cylinder $\overline{\Omega} \times \R$, and our Theorem \ref{thm:main1} is a consequence of a more general Bernstein-type theorem for these types of surfaces.  We state this here.  See the following section for notation.
\begin{theorem}\label{thm:main2}
Let $\Omega \subsetneq \R^n$ be an open, convex subset of $\R^n$, $l : \R^n \to \R$ a linear function, and $T \in \cI_n(\R^{n+1})$ a mass-minimizing integral $n$-current in $\R^{n+1}$.  Suppose $T$ has the form $T = \del [E] - [G]$ for $E \subset \R^{n+1}$ satisfying
\[
E \setminus (\overline{\Omega} \times \R) = \{ x_{n+1} < l(x_1, \ldots, x_n) \} \setminus (\overline{\Omega} \times \R)
\]
and $[G] = [\graph(l|_{\R^n \setminus \overline{\Omega}})]$ endowed with the upwards orientation.  Then:
\begin{enumerate}
\item $T = [H]$ for some half-hyperplane $H$, if $\Omega$ is a half-space;
\item $T = [\graph(l|_\Omega)]$, if $\Omega$ is not a half-space.
\end{enumerate}
\end{theorem}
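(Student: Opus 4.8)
The plan is to build the proof around the minimal foliation of $\R^{n+1}$ by the affine hyperplanes $P_t := \{x_{n+1} = l(x_1,\dots,x_n)+t\}$, $t\in\R$, together with a calibration argument and the strong maximum principle for mass-minimizing hypersurface currents. First I would record the structure forced by the hypotheses: since $E$ coincides with the subgraph $\{x_{n+1}<l\}$ on $(\R^n\setminus\overline\Omega)\times\R$, the current $\del[E]$ restricted to that region equals $[G]$, so $\spt T\subseteq\overline\Omega\times\R$ and $T$ vanishes outside the cylinder; moreover $\del T=-\del[G]$ is supported in $P_0\cap(\del\Omega\times\R)$ and has multiplicity one. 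Thus $T$ is mass-minimizing, confined to the convex cylinder $\overline\Omega\times\R$, with multiplicity-one boundary lying in the single leaf $P_0$. After fixing orientations, $T$ and $S:=[\graph(l|_\Omega)]$ solve the same Plateau problem, and $S$ is calibrated by the constant $n$-form $\omega$ dual to the oriented hyperplane $P_0$.

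For the case in which $\Omega$ is not a half-space and (first) is bounded, I would run a calibration argument. Here $\spt\del T$ is compact, so $\spt T$ is contained in its convex hull, hence compact; then $T-S$ is a compactly supported integral cycle, so $T-S=\del Q$, and since $d\omega=0$ we get $\int_T\omega=\int_S\omega=\bM(S)=\bM(T)$. Equality in the comass inequality $\bM(T)\ge\int_T\omega$ forces $\omega$ to calibrate $T$, i.e.\ the approximate tangent plane to $T$ equals $P_0$ at $\|T\|$-a.e.\ point, so $\spt T$ lies in countably many translates $P_0+ce_{n+1}$; constancy together with $\spt(\del T)\subseteq P_0$ and compactness of $\spt T$ kills the translates with $c\ne0$, so $\spt T\subseteq P_0$. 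One more application of the constancy theorem inside $P_0$, using $\spt T\subseteq\overline\Omega\times\R$ and the multiplicity-one boundary, gives $T=[\graph(l|_\Omega)]$, which is conclusion (2).

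In the half-space case, say $\overline\Omega\times\R=\{x_1\ge0\}$ after a rotation, the calibration argument fails because $P_0$ is no longer the unique minimal leaf through the affine $(n-1)$-plane $L:=\spt\del T\subseteq\{x_1=0\}$. Writing $w$ for a unit vector with $w\perp L$ and $w\in\{x_1=0\}$, the half-hyperplanes $H_\theta:=L+\R_{\ge0}(\cos\theta\,w+\sin\theta\,e_1)$, $\theta\in[0,\pi]$, all have boundary $L$, all lie in $\{x_1\ge0\}$, are each area-minimizing (pieces of hyperplanes), and together foliate $\{x_1\ge0\}$ with axis $L$. If $\spt T\subseteq\{x_1=0\}$ then $T$ is a flat half-hyperplane by constancy; otherwise set $\theta^*:=\sup\{\theta(p):p\in\spt T,\ x_1(p)>0\}$ where $\theta(p)\in(0,\pi)$ is the angle with $p\in H_{\theta(p)}$. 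At a touching point $T$ lies weakly on one side of the minimal hypersurface $H_{\theta^*}$, so the strong maximum principle forces $\spt T$ to agree with $H_{\theta^*}$ near that point with some integer multiplicity; unique continuation (real-analyticity of mass-minimizers at regular points, connectedness of $H_{\theta^*}\setminus L$) propagates this along all of $H_{\theta^*}$, the multiplicity is one because $\del T$ has multiplicity one, and a short argument rules out any further component (an extra piece would sit strictly on the $\theta<\theta^*$ side, would have nonzero boundary forced onto $L$, and would contradict either the multiplicity count or $\spt T\subseteq\{x_1\ge0\}$). Hence $T=[H_{\theta^*}]$, which is conclusion (1).

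The step I expect to require the most work is the non-compact, non-half-space situation (slabs, wedges, paraboloid-type domains): there the calibration argument of the second paragraph is unavailable since $T-S$ need not be compactly supported, and one must rule out $T$ escaping to infinity and must ensure that the analogue of the sliding leaf attains a first interior contact. The natural remedy combines (i) a confinement estimate placing $\spt T$ in a bounded slab $\bigcup_{a\le t\le b}P_t$, obtained from the convexity of $\del(\overline\Omega\times\R)=\del\Omega\times\R$ via monotonicity, and (ii) a blow-down at infinity whose limit is a mass-minimizing cone supported in $(\text{recession cone of }\Omega)\times\R$ with planar-or-lower-dimensional boundary, feeding an induction on $n$; near the possibly non-smooth edges of $\del\Omega$ a blow-up reduces to the half-space case already treated. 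A secondary technical point is the justification of the strong maximum principle in this possibly singular setting, which rests on the fact that a mass-minimizing hypersurface current lying on one side of a hyperplane is regular, with the predicted multiplicity, near any interior touching point.
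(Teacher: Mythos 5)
Your calibration argument for \emph{bounded} convex $\Omega$ is correct and is a genuinely different, cleaner route than the paper's (which uses a sliding-barrier/maximum-principle argument for all non-half-space domains). However, the bounded case is the easy one; the real content of the theorem is the unbounded setting, and that is exactly where your plan has two substantive gaps.

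First, the ``confinement estimate placing $\spt T$ in a bounded slab $\bigcup_{a\le t\le b}P_t$, obtained\ldots via monotonicity'' is not established and, more importantly, is not true in the generality in which you invoke it. Monotonicity of density does not confine the support of a mass-minimizing current a priori; confinement is something one must \emph{prove}, and how one proves it depends delicately on the geometry of $\Omega$. The paper's Example (the current $\{x_2=0,x_3\ge 0\}\cup\{x_2=1,x_3\le 0\}$, minimizing in the slab $\R\times[0,1]\times\R$ with linear boundary) shows that a mass-minimizing $T$ with boundary in a hyperplane, lying in a convex cylinder, need \emph{not} lie in a bounded $e_{n+1}$-slab unless one uses the full $T=\del[E]-[G]$ structure with $E$ squeezed between half-spaces. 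In the slab case the paper's Lemma~\ref{lem:slab-bern} handles exactly this: a sliding argument gives a contact point if $\alpha:=\sup_{\spt T}d(\cdot,\R^n)<\infty$, but ruling out $\alpha=\infty$ requires a blow-down argument in which one shows that the limit $(n+1)$-current $[E'']$ is trapped in a lower-dimensional slab and hence vanishes, contradicting $0\in\spt T''$. Your ``blow-down plus induction'' sketch is pointing in roughly the right direction, but as written does not use the $\del[E]$ structure, which is essential.

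Second, your half-space argument slides $H_\theta$ to the extremal angle $\theta^*$ and then assumes a first touching point. For a general (non-conical) mass-minimizing $T$ the supremum $\theta^*$ need not be attained: $\spt T$ may only asymptotically approach $H_{\theta^*}$ at infinity or as one approaches the boundary $L$. Moreover, if contact occurs on $L$ itself, you would need Hopf's boundary-point lemma, which requires boundary regularity of $T$ near $L$ that you have not established. The paper resolves both issues by \emph{not} applying the sliding argument directly to $T$. Instead, Lemma~\ref{lem:plane-bern} classifies mass-minimizing \emph{cones} of the given form by induction on $n$ (where dilation-invariance of $\spt T$ forces the supremum to be attained on $\del B_1$, and the induction step provides the boundary regularity via Allard's theorem and reflection needed for the Hopf lemma), and the proof of Theorem~\ref{thm:main2} then reduces the general $T$ to the conical case by showing $\theta_T(0)=\theta_T(0,\infty)=1/2$ using blow-up/blow-down and monotonicity. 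Your sketch conflates these two steps, and the unique-continuation/multiplicity accounting you appeal to also rests on regularity that in this boundary setting is nontrivial and requires the inductive argument.
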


We also remark that even when $n \leq 7$ and $\Omega$ is a half-space, then for general $l$ one cannot simply reflect to reduce to the standard Bernstein problem, as one may not obtain a graph after reflection.  (Of course, if $u$ has $0$ Dirichlet or Neumann data, then a reflection argument like this would work).

Many others have studied the behavior of minimal graphs over unbounded domains in $\R^n$.  In exterior domains $\R^n \setminus \Omega$ ($\Omega$ bounded and $2 \leq n \leq 7$), \cite{Bers}, \cite{SimAsym} proved that the gradient of a minimal graph $u$ is bounded and has a limit as $|x| \to \infty$.  When $n \geq 8$, $Du$ may not be bounded, but \cite{SimEnt} showed precise cylindrical asymptotics;  \cite{Weit}, \cite{SJ2002} proved related results for domains in $\R^2$.

When $\Omega \subset \R^2$ is contained in a wedge of opening angle $< \pi$, then \cite{ER1989} showed that any minimal graph over $\Omega$ with zero Dirichlet data must be $0$.  In a similar vein, when $\Omega \subset \R^2$ is a wedge with opening angle $\neq \pi, 2\pi$, then \cite{HSwedge1999} showed any minimal graph with zero Neumann data away from the cone point must be constant (see also \cite{LCC1997}).  In \cite{JWZ2019} the second named author and his collaborators obtained Liouville type theorems for minimal graphs over half-spaces (for any $n$) with linear Dirichlet boundary value or constant Neumann boundary value.

\vspace{3mm}

If $\spt T \not\subset \overline{\Omega} \times \R$, or one relaxes the condition $T = \del [E] - [G]$, then Theorem \ref{thm:main2} can fail.
\begin{example}
Half of Enneper's surface, and the half-helicoid, are both area-minimizing hypersurfaces in $\R^3$ which bound a line (\cite{Wh96}; c.f. \cite{Ed}).  Neither is contained in a half-space.  The tangent cone at infinity to half-Enneper's surface is a plane with multiplicity $2$ on one side, and multiplicity $1$ on the other.  The half-helicoid does not even have quadratic area growth.  Of course a plane with multiplicity $j$ on one side, $j+1$ on the other, is area-minimizing, and contained in a half-space, but does not admit a decomposition of the form $\del [E] - [G]$ unless $j = 0$.

The surface $T = \{ x_2 = 0, x_3 \geq 0 \} \cup \{ x_2 = 1, x_3 \leq 0 \}$ endowed with the appropriate orientation is area-minimizing in $\R^3$ (the constant vector field $e_2$ is a calibration), contained in a slab $\R \times [0, 1] \times \R$, and has linear boundary $[\R\times \{(0, 0)\}] - [\R \times \{(1, 0)\}]$, but is not contained in hyperplane.  In this case the surface fails to admit the appropriate boundary structure $T = \del [E] - [G]$.

On the other hand, we don't know any example of a non-planar area-minimizing surface with compact boundary contained in a hyperplane -- it is possible one might be able to weaken the hypotheses on $T$ when $\Omega$ is not a half-space or a slab.
\end{example}

We first prove a technical Lemma \ref{lem:graph-minz} which allows us to reduce Theorem \ref{thm:main1} to Theorem \ref{thm:main2}.  We use the convexity assumption most strongly here.  We prove Theorem \ref{thm:main2} using a barrier argument, which is slightly different depending on whether $\Omega$ is a half-space, a slab, or neither of those two (in which case $\Omega$ is contained in a convex cone, itself being a proper subset of some half-space).  The basic idea is to rotatate a half-plane around the linear boundary of $T$ until it touches $T$, and then use the strong-maximum principle for minimal surfaces \cite{SW}.  This is easiest in the third case, and in fact here we do not even require convexity of $\Omega$, only that $\Omega$ is contained in some cone, which itself contains no half-space.  In the second case (when $\Omega$ is a slab), we need to additionally use the structure $T = \del [E]$, to rule out the possibility that $T$ looks like a vertical plane.  The first case (when $\Omega$ is a half-space) is in some sense the least rigid, and we apply the barrier argument to a tangent cone of $T$ rather than $T$ itself, to show that any mass-minimizing multplicity-one hypercone contained in a half-space and with linear boundary must itself be a multiplicity-one half-plane.  This classification result for minimizing cones-with-boundary is actually a corollary of the remarkable theorem due to \cite{HS79}, which classifies \emph{any} mass-minimizing hypercone with linear boundary as linear, but since our setting is vastly simpler we provide a much shorter, largely self-contained proof.

\section{Preliminaries}

We first outline our notation.  Unless otherwise stated we allow $n\geq 1$.  Let us write $e_1, \ldots, e_{n+1}$ for the standard basis of $\R^{n+1}$.  For $k < n+1$, we identify $\R^k \equiv \R^k \times \{0^{n+1-k}\} \subset \R^{n+1}$.  Given $v \in \R^{n+1}$, write $\R v = \{ r v : r \in \R \}$.  Given $U \subset \R^{n+1}$, write $U + \R v = \{ u + r v : u \in U, r \in \R \}$.  Let $\eta_{x, r}(y) = (y - x)/r$ be the translation/dilation map.

If $u : \Omega \subset \R^n \to \R$, write $\graph(u) = \{ (x', u(x')) : x' \in \Omega \}$ for the graph of $u$ in $\R^{n+1}$.  We shall always assume $\graph(u)$ is endowed with the ``upwards'' orientation (i.e. so that the normal $\nu$ satisfies $\nu \cdot e_{n+1} > 0$).  Given a set $U \subset \R^{n+1}$, write $\overline{U}$ for the closure of $U$, and write $d(x, U) = \inf \{ |x - y| : y \in U \}$ for usual Euclidean distance.  For $a \in \R^{n+1}$, $B_r(a)$ always denotes the open Euclidean $r$-ball centered at $a$, and more generally $B_r(U) = \{ x : d(x, U) < r \}$ is the open $r$-tubular neighborhood of $U$.  Write $\haus^k$ for the $k$-dimensional Hausdorff measure, and $\omega_k = \haus^k(B_1^k)$ for the $k$-volume of the $k$-dimensional unit ball.

Recall that $T$ is an integer-multiplicity rectifiable $k$-current in an open set $U$ if there is a countably $k$-rectifiable set $M_T \subset U$, a $\haus^k \llcorner M_T$-measurable simple unit $k$-vector $\xi_T$ orienting $T_x M_T$ for $\haus^k$-a.e. $x$, and a $\haus^k \llcorner M_T$-measurable positive integer-valued function $\theta_T$, so that
\[
T(\omega) = \int_{M_T} <\omega, \xi_T>\theta_T d\haus^k
\]
for every smooth, compactly-supported $k$-form in $U$.  We write $||T|| \equiv \haus^k \llcorner \theta_T \llcorner M_T$ for the mass measure of a current.  Given a Lipschitz map $f : \R^{n+1} \to \R^{n+1}$ which is proper when restricted to $\spt T$, we write $f_\sharp T$ for the pushforward.

We denote by $\cI_k(U)$ the set of \emph{integral} $k$-currents in $U$: that is, integer-multiplicity rectifiable $k$-currents $T$ in $U$ with the property that both $||T||$ and $||\del T||$ are Radon measures in $U$.  We say $T \in \cI_k(U)$ is mass-minimizing in an open set $U' \subset U$ if, for every $W \subset\subset U'$ and every $S \in \cI_k(U')$ satisfying $\del S = 0$, $\spt S \subset W$, we have
\[
||T||(W) \leq ||T+S||(W).
\]

If $L$ is a Lipschitz submanifold with a fixed (for the duration of this paper) $\haus^k \llcorner L$-measurable choice of orientation, then we will write $[L]$ for the multiplicity-one rectifiable $k$-current induced by integration.  If $E$ is a subdomain of $\R^{n+1}$, we will always orient $E$ with the constant, positive orientation of $\R^{n+1}$.  $E$ is called a set of locally-finite perimeter in $U$ if $[E] \in \cI_{n+1}(U)$.

We will use the following basic facts about convex domains.
\begin{lemma}\label{lem:convex}
If $\Omega$ is an open convex subdomain of $\R^{n}$, then:
\begin{enumerate}
\item $\del \Omega$ is locally-Lipschitz, and hence $\Omega$ is a set of locally-finite perimeter;
\item the nearest point projection $p_{\Omega} : \R^{n} \to \overline{\Omega}$ is $1$-Lipschitz;
\item for any ball $B_r(x)$, we have $\haus^{n-1}(\overline{B_r(x)} \cap \del\Omega) \leq \haus^{n-1}(\del B_r(x))$. \qedhere
\end{enumerate}
\end{lemma}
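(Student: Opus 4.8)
The plan is to derive all three statements from the single structural fact that an open convex set $\Omega \subsetneq \R^n$ is the intersection of the (closed) half-spaces containing it, together with the elementary geometry of the nearest-point projection onto a convex set.

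\emph{Part (2).} I would start here, since it is the cleanest and is used in the other parts. Given $x, y \in \R^n$, let $p = p_\Omega(x)$, $q = p_\Omega(y)$. The standard variational characterization of the projection onto a closed convex set says that $\langle x - p, z - p\rangle \leq 0$ for all $z \in \overline\Omega$, and likewise $\langle y - q, z - q \rangle \leq 0$. Taking $z = q$ in the first and $z = p$ in the second and adding, one gets $\langle (x - p) - (y - q), p - q \rangle \geq 0$, i.e. $\langle x - y, p - q\rangle \geq |p - q|^2$. Cauchy--Schwarz then gives $|p - q| \leq |x - y|$, which is $1$-Lipschitzness. (If $\overline\Omega$ is empty or a point the statement is trivial; convexity of $\overline\Omega$ is what makes the projection single-valued.)

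\emph{Part (1).} Write $\Omega$ as an at most countable union of bounded convex open pieces, or simply work locally. Near any boundary point $x_0 \in \del\Omega$, pick a supporting half-space $\{v \cdot (x - x_0) \le 0\} \supset \Omega$ with unit normal $v$; after a rotation assume $v = e_n$. Since $\Omega$ is open and $\subsetneq \R^n$, in a neighborhood of $x_0$ the set $\del\Omega$ is the graph $\{x_n = f(x_1, \dots, x_{n-1})\}$ of a concave function $f$ defined on an open subset of $\R^{n-1}$, because for each fixed $(x_1,\dots,x_{n-1})$ the slice $\Omega \cap (\{(x_1,\dots,x_{n-1})\}\times\R)$ is an interval unbounded below (within the supporting half-space) or empty, with $f$ its sup; concavity of $f$ follows from convexity of $\Omega$. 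A finite-valued concave function on an open convex set is locally Lipschitz, so $\del\Omega$ is locally a Lipschitz graph. In particular $\haus^{n-1}(\del\Omega \cap K) < \infty$ for every compact $K$, and by the structure theorem for sets of finite perimeter (or directly, since $\Omega$ is locally the subgraph of a Lipschitz function) $[\Omega] \in \cI_{n+1}^{\mathrm{loc}}$ — more precisely $\Omega$ has locally finite perimeter with $\del[\Omega] = [\del\Omega]$ up to orientation.

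\emph{Part (3).} This is the main obstacle and where I would spend the most care. Fix $B_r(x)$. Using Part (2), the projection $p_\Omega$ restricted to $\R^n \setminus \overline\Omega$ maps into $\del\Omega$ and is $1$-Lipschitz; I want to realize $\overline{B_r(x)} \cap \del\Omega$ as (essentially) the $1$-Lipschitz image of a piece of $\del B_r(x)$ and quote the fact that $1$-Lipschitz maps do not increase $\haus^{n-1}$. The clean way: let $\pi : \R^n \to \del B_r(x)$ be radial projection from the center $x$ (defined off $x$); I claim the composition, going the other direction, works. Actually the correct map is $p_\Omega \circ (\text{radial projection onto } \del B_r(x))$ applied to points, but one must check surjectivity onto $\overline{B_r(x)} \cap \del\Omega$. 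The key geometric claim is: for every $y \in \overline{B_r(x)} \cap \del\Omega$, the outward ray from $y$ meeting $\del B_r(x)$ lands at a point whose nearest point in $\overline\Omega$ is $y$ itself — equivalently, for $y \in \del\Omega \cap \overline{B_r(x)}$, there is a supporting hyperplane to $\Omega$ at $y$ that separates $x$'s side appropriately, so that moving radially outward from $x$ past $y$ stays on the far side of $\overline\Omega$. I would verify this using the supporting-hyperplane description: if $H_y = \{v\cdot(\,\cdot - y) = 0\}$ supports $\Omega$ at $y$ with $\Omega \subset \{v \cdot (\cdot - y) \le 0\}$, then since $x$ satisfies $v\cdot(x - y)$ could be of either sign, one instead uses that the outward normal direction $\nu$ of $\del B_r(x)$ at the relevant point is close to — or one simply defines the map as $p_\Omega$ composed with the ``push to the sphere'' and checks it is $1$-Lipschitz and hits everything. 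If the direct surjectivity argument is delicate, the robust fallback is: $\haus^{n-1}(\overline{B_r(x)}\cap\del\Omega) = \haus^{n-1}\big(p_\Omega(\del B_r(x) \setminus \overline\Omega)\big)$ once one shows $p_\Omega$ maps $\del B_r(x)$ onto all of $\overline{B_r(x)}\cap\del\Omega$ (every boundary point of $\Omega$ inside the ball is the nearest point of some sphere point, because the normal ray exits the ball), and then $\haus^{n-1}(p_\Omega(\del B_r(x))) \le \haus^{n-1}(\del B_r(x))$ by Part (2). I expect the honest verification of that surjectivity — i.e. that the inward normal ray from a point of $\del\Omega \cap \overline{B_r(x)}$ reaches $\del B_r(x)$ before re-entering $\overline\Omega$, which uses convexity crucially — to be the only non-formal point, and I would handle it by the supporting-hyperplane/convexity argument sketched above.
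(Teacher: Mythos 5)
Your proposal is essentially the paper's intended argument. The paper leaves almost all of this to the reader, remarking only that part (3) ``follows from points a) and b) by showing that $p_{\Omega} : \del B_r(x) \to \overline{B_r(x)} \cap \del\Omega$ is onto,'' which is exactly your ``robust fallback.'' Parts (1) and (2) are standard convex analysis and you treat them correctly (modulo the typo $\cI_{n+1}^{\mathrm{loc}}$ where $\Omega \subset \R^n$).

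One caution on part (3): your ``clean way'' via radial projection from the center $x$ does not actually work. If $z$ lies on the radial ray from $x$ through $y \in \del\Omega$ beyond $y$, the nearest point $p_\Omega(z)$ need not be $y$ unless the radial direction happens to coincide with an outward normal to $\Omega$ at $y$; and if $x \notin \overline\Omega$ the radial ray through $y$ may even re-enter $\Omega$ when $y$ is the near intersection of the ray with $\del\Omega$. The correct argument is the one you label as the fallback and is the paper's hint: for $y \in \overline{B_r(x)} \cap \del\Omega$, choose an outward unit normal $\nu$ to a supporting hyperplane of $\Omega$ at $y$. The variational inequality gives $p_\Omega(y + t\nu) = y$ for all $t \geq 0$, and since $y \in \overline{B_r(x)}$ and the ray is unbounded, $\{y + t\nu : t \geq 0\}$ meets $\del B_r(x)$ at some $z$ with $p_\Omega(z) = y$. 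Hence $\overline{B_r(x)} \cap \del\Omega \subset p_\Omega(\del B_r(x))$, and part (2) then gives $\haus^{n-1}(\overline{B_r(x)} \cap \del\Omega) \leq \haus^{n-1}(\del B_r(x))$. So the core of your write-up is right, but I would drop the radial-projection digression rather than present it as the clean alternative.
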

Point c) follows from points a) and b) by showing that $p_{\Omega} : \del B_r(x) \to \overline{B_r(x)} \cap \del\Omega$ is onto.  We leave the rest of the proof to the reader.

Our main technical lemma, which allows us to reduce Theorem \ref{thm:main1} to a problem about mass-minimizing boundaries, is the following.
\begin{lemma}\label{lem:graph-minz}
Let $\Omega$ be an open convex domain in $\R^n$, and $l : \R^n \to \R$ a locally-Lipschitz function.  Take $u \in C^2(\Omega) \cap C^0(\overline{\Omega})$ a solution to
\[
\sum_i D_i \left( \frac{D_i u}{\sqrt{1+|Du|^2}} \right)  = 0 \text{ on } \Omega, \quad u = l \text{ on } \del\Omega,
\]
Define
\begin{align*}
E &= \{ (x', x_{n+1}) \in \Omega \times \R : x_{n+1} < u(x') \} \\
 &\quad \cup \{ (x', x_{n+1}) \in (\R^n \setminus \Omega) \times \R : x_{n+1} < l(x') \},
\end{align*}
and $T = [\graph(u|_{\Omega})]$, $[G] = [\graph(l|_{\R^n \setminus \overline{\Omega}})]$, both oriented with the upwards unit normal.

Then:
\begin{enumerate}
\item $E$ is a set of locally-finite perimeter in $\R^{n+1}$, and $\del [E] = T + [G]$, $||\del [E]|| = ||T|| + ||[G]||$.
\item $T$ is an integral mass-minimizing current in $\R^{n+1}$.
\item $||T||(B_r(0)) \leq c(n) r^n$ for all $r > 0$.
\end{enumerate}
\end{lemma}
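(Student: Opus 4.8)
The plan is to establish the three claims in order, with the bulk of the work going into (1), after which (2) and (3) follow by fairly standard arguments.

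For claim (1), I would first observe that $\partial\Omega$ is locally Lipschitz by Lemma \ref{lem:convex}(a), and that $u$ extends continuously to $\overline\Omega$ with boundary values $l$, so the graph of $u$ over $\Omega$ glues continuously to the graph of $l$ over $\R^n\setminus\overline\Omega$ along $\graph(l|_{\partial\Omega})$. The set $E$ is then a subgraph region: locally near a boundary point in $(\R^n\setminus\overline\Omega)\times\R$ it is the subgraph of the Lipschitz function $l$, and near a point of $\Omega\times\R$ it is the subgraph of the $C^2$ function $u$; in both cases it is a set of locally finite perimeter, so by a standard localization/partition-of-unity argument $E$ has locally finite perimeter in all of $\R^{n+1}$. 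To identify $\partial[E]$ I would compute the reduced boundary: away from the ``seam'' $\graph(l|_{\partial\Omega})\subset\del\Omega\times\R$, the reduced boundary of $E$ is precisely $\graph(u|_\Omega)\cup\graph(l|_{\R^n\setminus\overline\Omega})$ with upward normal, so $\partial[E]$ and $T+[G]$ agree as currents away from this seam; since the seam lies over $\partial\Omega$, which has $\haus^{n-1}$-measure zero intersection with any sphere (hence is $\haus^{n-1}$-$\sigma$-finite and in particular $\haus^n$-null in $\R^{n+1}$), and both $\partial[E]$ and $T+[G]$ have locally finite mass, the constancy/support argument shows they are equal everywhere. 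The mass additivity $\|\partial[E]\|=\|T\|+\|[G]\|$ holds because $T$ and $[G]$ have disjoint supports up to the $\haus^n$-null seam.

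For claim (2), I would invoke the classical fact that the subgraph of a solution to the minimal surface equation is locally perimeter-minimizing in the slab $\Omega\times\R$ (Miranda, Giusti): more precisely, $[E]$ restricted to $\Omega\times\R$ minimizes perimeter there, because $u$ solves the MSE. Since $[G]$ is a fixed current supported outside $\overline\Omega\times\R$ and $\partial[E]=T+[G]$, any competitor $T+S$ with $\partial S=0$ and $\spt S\subset W\subset\subset\R^{n+1}$ yields a competitor set $E'$ with $\partial[E']=T+S+[G]$; the key point is that one may take $S$ supported in $\Omega\times\R$ after a cutting argument (or directly: the minimizing property of the subgraph in the slab, combined with the fact that the portion of $T$ outside $\overline\Omega\times\R$ coincides with $[G]$'s complement which is untouched). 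Then $\|T\|(W)=\|\partial[E]\|(W)-\|[G]\|(W)\le\|\partial[E']\|(W)-\|[G]\|(W)=\|T+S\|(W)$, giving mass-minimality of $T$.

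For claim (3), the area bound, I would use a calibration-type or monotonicity-type estimate. Since $T=[\graph(u|_\Omega)]$ is a minimizing hypersurface with $\partial T=-[G]+\partial[E]$ supported appropriately, the cleanest route is: $\|T\|(B_r(0))\le\|T'\|(B_r(0))$ for the competitor obtained by pushing $T\llcorner B_r(0)$ onto $\partial B_r(0)$ (coning over the boundary, or projecting), which is bounded by $\haus^n(\partial B_r(0))+\haus^{n-1}(\text{slice})\cdot r\le c(n)r^n$, using that the slices of $\partial T$ have controlled size because $G$ is a Lipschitz graph and $\partial\Omega$ satisfies Lemma \ref{lem:convex}(c). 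Alternatively, and perhaps more simply, I can bound $\haus^n(\graph(u)\cap B_r(0))$ directly by comparison with the competitor formed from the part of $\{x_{n+1}=0\}$ together with a vertical piece over $\partial\Omega$, invoking Lemma \ref{lem:convex}(c) to control the vertical part.

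The main obstacle I anticipate is claim (2): carefully justifying that an arbitrary compactly-supported cycle $S$ added to $T$ can be reduced to (or compared against) a competitor that only modifies $E$ inside the slab $\Omega\times\R$, so that the local minimality of the subgraph $[E]\llcorner(\Omega\times\R)$ actually applies. The subtlety is that $S$ might have support straddling $\partial\Omega\times\R$; handling this requires either a slicing argument near $\partial\Omega$ (using that $\partial\Omega\times\R$ has the right regularity) or a direct argument that the ``outside'' part of $E$ is rigidly determined and contributes no savings. The other steps are comparatively routine applications of geometric measure theory and the classical subgraph-minimality of MSE solutions.
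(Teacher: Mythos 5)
Your overall strategy (subgraph calibration, identification of $\del[E]$, comparison arguments for the area bound) matches the paper's, but there are two genuine gaps, both at the points where convexity must actually be used.

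First, in claim (1) the localization/partition-of-unity argument does not give locally finite perimeter of $E$ across the seam $L=\graph(l|_{\del\Omega})$: any open set covering a point of $L$ meets both regimes, and finiteness of perimeter in $\Omega\times\R$ and in $\R^{n+1}\setminus(\overline\Omega\times\R)$ separately does not control the mass of $\del[E]$ near $L$. Worse, even the perimeter of $E$ in $(\Omega\times\R)\cap B_r(0)$ is not a priori finite, since $u\in C^2(\Omega)\cap C^0(\overline\Omega)$ allows $|Du|$ to blow up at $\del\Omega$ and $\int\sqrt{1+|Du|^2}$ could diverge. The paper first proves the quantitative bound $\|T\|(B_r(0)\cap(\Omega\times\R))\le c(n)r^n$ by comparing with $\del[E\cup D_\eps]$ for the convex set $D_\eps=(\Omega_\eps\times\R)\cap B_r(0)$ (whose perimeter is controlled by $\haus^n(\del B_r)$ precisely because it is convex), and only then establishes locally finite perimeter near $L$ via a cutoff $\phi=\eta(d(\cdot,L)/\tau)$ together with the Minkowski-content estimate $\leb^{n+1}(B_\tau(L)\cap B_r)=O(\tau^2)$, valid because $L$ is countably $(n-1)$-rectifiable with locally finite $\haus^{n-1}$-measure. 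So the logical order is reversed relative to your write-up: your claim (3) (or a version of it) must come before your claim (1), and the cutoff step is not optional. (Once locally finite perimeter is known, your observation that $\|\del[E]\|(L)=0$ because $\haus^n(L)=0$ is fine.)

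Second, in claim (2) you correctly identify the main obstacle --- reducing an arbitrary compactly supported cycle $S$ to a competitor living in $\Omega\times\R$ --- but you do not supply the idea that resolves it, and neither of your suggested alternatives is carried out. The paper's mechanism is the nearest-point projection $p_\eps$ onto the convex set $\overline{\Omega_\eps}\times\R$, which is $1$-Lipschitz (Lemma \ref{lem:convex}(b)) and hence mass-nonincreasing: one replaces $S$ by $(p_\eps)_\sharp S$, which is still a cycle, is supported in the cylinder so the calibration inequality applies, agrees with $S$ on $\Omega_\eps\times\R$, and whose extra mass on the boundary of the cylinder is controlled by $\|S\|(B_r\setminus(\Omega_\eps\times\R))$; letting $\eps\to0$ and using $\|T\|(\del\Omega\times\R)=0$ finishes the argument. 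A naive cutting of $S$ along $\del\Omega\times\R$ creates boundary there that must be filled, which is exactly what the projection does for free. Without this (or an equivalent device) claim (2) is not proved.
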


\begin{proof}
We first observe that by the usual calibration argument, $T$ is mass-minimizing in $\Omega \times \R$.  In other words, given any $S \in \cI_n(\Omega \times \R)$ with $\del S = 0$, and $\spt S \subset W \subset\subset \Omega \times \R$, then
\begin{equation}\label{eqn:graph-minz}
||T||(W) \leq ||T + S||(W).
\end{equation}

Let us write $L = \graph(l |_{\del \Omega})$.  Trivially $E$ has locally-finite perimeter in $\R^{n+1} \setminus L$, since $\del E$ is locally Lipschitz in this region, and moreover $\del [E] \llcorner (\Omega \times \R) = T$, $\del [E] \llcorner (\R^{n+1} \setminus (\Omega \times \R)) = [G]$.  Since both $u$ and $l$ are continuous, we have
\begin{equation}
||\del[E]||((\del\Omega \times \R) \setminus L) = 0. \label{eqn:E-L}
\end{equation}

Fix $r > 0$, $\eps > 0$, and let $\Omega_\eps = \{ x \in \Omega : d(x, \del \Omega) > \eps \}$.  Then $D_\eps := (\Omega_\eps \times \R) \cap B_r(0)$ is a convex set of locally-finite perimeter, and
\begin{equation}\label{eqn:Deps-bound}
||\del [D_\eps]||(\overline{B_r(0)}) \leq \haus^n(\del B_r(0)) \leq \omega_n r^n.
\end{equation}
Taking $S = \del [E \cup D_\eps] - \del [E]$ in \eqref{eqn:graph-minz}, we deduce $||T||(D_\eps) \leq c(n) r^n$, and hence taking $\eps \to 0$ gives
\begin{equation}\label{eqn:D-bound}
||T||( B_r(0) \cap (\Omega \times \R)) = || \del [E]||( B_r(0) \cap (\Omega \times \R)) \leq c(n) r^n.
\end{equation}
On the other hand, if $K$ is any bound for the Lipschitz constant of $l|_{B_r(0)}$, then we have the trivial bound
\begin{equation}\label{eqn:G-bound}
||\del [E]||( B_r(0) \setminus (\overline{\Omega} \times \R)) \leq c(n, K) r^n.
\end{equation}
Combining \eqref{eqn:E-L}, \eqref{eqn:D-bound}, \eqref{eqn:G-bound}, we obtain
\begin{equation}\label{eqn:E-minus-L}
||\del [E]||(B_r(0) \setminus L) \leq c(n, K) r^n.
\end{equation}

Now take $X \in C^1_c(B_r(0), \R^{n+1})$, and fix $0 < \tau < r$.  Let $d : B_{2r}(0) \to \R$ be a smooth function such that 
\[
d(x) \in [\tau/2, \tau] \implies x \in B_{2\tau}(L) \setminus B_{\tau/4}(L), \quad |Dd| \leq 2.
\]
(For example, $d$ could be a mollification of $d(\cdot, L)$.)  Let $\eta(t) : \R \to \R$ be a non-negative, increasing function which is $\equiv 0$ on $(-\infty, 1/2]$, $\equiv 1$ on $[1, \infty)$, and $|\eta'| \leq 10$.  Set $\phi = \eta(d/\tau)$.

Since $L$ is countably $(n-1)$-rectifiable with locally-finite $\haus^{n-1}$-measure (being the graph of a locally Lipschitz function over a locally Lipschitz domain), we have (\cite[Section 3.2.29]{Fed})
\begin{equation}\label{eqn:minkowski}
\limsup_{\tau \to 0} \tau^{-2} \leb^{n+1}(B_\tau(L) \cap B_r(0)) = \haus^{n-1}(L \cap B_r(0)) \leq c(n, K) r^{n-1} .
\end{equation}

We compute, using \eqref{eqn:E-minus-L}, \eqref{eqn:minkowski}, and taking $\tau > 0$ sufficiently small:
\begin{align*}
\left| \int_E \phi \mathrm{div}(X) d\leb^{n+1} \right|
&\leq \left| \int \phi X \cdot \nu_E d|| \del[E]|| \right| + |X|_{C^0} \left| \int_{B_r(0)} |D\phi| d\leb^{n+1} \right| \\
&\leq |X|_{C^0} ||\del[E]||(B_r(0) \setminus L) + 20 |X|_{C^0} \tau^{-1} \leb^{n+1}(B_r(0) \cap B_{2\tau}(L)) \\
&\leq c(n, K) |X|_{C^0} r^n + c(n, K) |X|_{C^0} \tau r^{n-1} .
\end{align*}
Here $\nu_E$ is the outward unit normal of $\del [E]$ away from $L$.  Letting $\tau \to 0$ gives that
\[
\left| \int_E \mathrm{div}(X) d\leb^{n+1} \right| \leq c(n, K)|X|_{C^0} r^n,
\]
which implies that $E$ has locally-finite perimeter, and hence $\del [E]$ is integral.  As a corollary, since $\haus^n(L) = 0$ we get 
\begin{equation}\label{eqn:E-no-L}
||\del[E]||(L) = 0.
\end{equation}

Finally, we observe that \eqref{eqn:E-L}, \eqref{eqn:E-no-L} imply
\[
\del[E] = \del [E] \llcorner (\Omega \times \R) + \del[E] \llcorner ((\R^{n+1} \setminus \overline{\Omega}) \times \R).
\]
which gives the equalities of Item a).

Item a) implies $T = \del[E] - [G] \in \cI_n(\R^{n+1})$.  We show $T$ is mass-minimizing in $\R^{n+1}$.  Let $p_\eps : \R^{n+1} \to \overline{\Omega_\eps} \times \R$ denote the nearest-point projection.  By Lemma \ref{lem:convex} $p_\eps$ is area-decreasing, and by construction $(p_0)_\sharp T = T$.

Take $r > 0$ and $S \in \cI_n(\R^{n+1})$ satisfying $\del S = 0$, $\spt S \subset B_r(0)$.  From \eqref{eqn:graph-minz} we have for every $\eps > 0$:
\begin{align*}
||T||(B_r(0))
&\leq ||T + (p_\eps)_\sharp S ||(B_R(0)) \\
&\leq ||T + S||(B_r(0) \cap (\Omega_\eps \times \R)) \\
&\quad + ||T||( B_r(0)  \setminus (\Omega_\eps \times \R)) + ||S||( B_r(0) \setminus (\Omega_\eps \times \R)).
\end{align*}
Taking $\eps \to 0$ and using that $||T||(\R^{n+1} \setminus (\Omega \times \R)) = 0$ gives
\[
||T||(B_r(0)) \leq ||T+S||(B_r(0) \cap \Omega \times \R) + ||S||(B_r(0) \setminus \Omega \times \R) = ||T+S||(B_r(0)),
\]
which is the required inequality.  This proves Item b).  Lastly, Item c) follows from \eqref{eqn:E-minus-L}, \eqref{eqn:E-no-L}.
\end{proof}

\section{Proof of Main Theorems}

In view of Lemma \ref{lem:graph-minz}, Theorem \ref{thm:main1} trivially follows from Theorem \ref{thm:main2}.  We will prove Theorem \ref{thm:main2} differently depending on whether $\Omega$ is a half-space (Case 1), or a slab (Case 2), or neither of the two (Case 3).  The first case is effectively handled by Lemma \ref{lem:plane-bern}, the second case by Lemma \ref{lem:slab-bern}, and the third by Lemma \ref{lem:convex-bern}.  Before proving these lemmas and the main theorem, we make a few observations that will come in useful.

First, given $T$ as in Theorem \ref{thm:main2}, and any $B_r(x) \subset \R^{n+1}$, then by taking $S = \del [E \cup B_r(x)] - \del[E]$ as a comparison current we get the bound
\begin{equation}\label{eqn:T-bound}
||T||(B_r(x)) \leq \haus^n(\del B_r(x)) + \haus^n(G \cap B_r(x)) \leq c(n) r^n.
\end{equation}

Second, if $T$ as in Theorem \ref{thm:main2} has $\del T = [\R^{n-1}]$, then if we denote by $|T|$ the varifold induced by $T$ and define $\tau(x_1, \ldots, x_{n+1}) = (x_1, \ldots, x_{n-1}, -x_n, -x_{n+1})$, then the varifold
\begin{equation}\label{eqn:T-refl}
V = |T| + \tau_\sharp |T|
\end{equation}
is a stationary integral varifold in $\R^{n+1}$ with $||V||(B_r(0)) = 2 ||T||(B_r(0))$ (c.f. \cite{Allard1975}).

Third, by standard monotonicity for stationary varifolds (\cite{Sim}), for the same $T$ we have the monotonicity:
\begin{equation}\label{eqn:T-mono}
\theta_T(0, r) - \theta_T(0, s) = \frac{1}{\omega_n} \int_{B_r(0) \setminus B_s(0)} \frac{|x^\perp|^2}{|x|^{n+2}} d||T||(x),
\end{equation}
where $\theta_T(x, r) = \omega_n^{-1} r^{-n} ||T||(B_r(x))$ is the usual Euclidean density ratio, and $x^\perp$ is the orthogonal projection of $x$ onto $T_x T^\perp$, defined for $||T||$-a.e. $x$.  In particular, $\theta_T(0, r)$ is increasing, and constant precisely when $T$ is a cone, i.e. when $(\eta_{0, \lambda})_\sharp T = T$ for all $\lambda > 0$.  Write $\theta_T(x) = \lim_{r \to 0} \theta_T(x, r)$.

\vspace{3mm}

To handle Case 3 we can use a straightforward barrier argument, more or less just rotating a plane until it touches $\spt T$, and then using the strong maximum principle for stationary varifolds \cite{SW}. \footnote{In fact since our barriers are planes, and $T$ is a minimizing boundary near the point of contact, one could instead use Allard's regularity theorem \cite{All} and the strong maximum principle for 2nd order elliptic PDEs.}  We get very strong rigidity because our assumption on $\Omega$ precludes the possibility that $\overline{\Omega} + \R v$ contains our barrier half-hyperplane, so $T$ cannot have any non-zero pieces outside of $\overline{\Omega}$.
\begin{lemma}\label{lem:convex-bern}
Let $\Omega$ be an open convex cone in $\R^n$, which is a proper subset of some half-space, and let $v \in \R^{n+1}$ with $v \cdot e_{n+1} > 0$.  Suppose $T \in \cI_n(\R^{n+1} \cap \{ x_{n+1} > 0 \})$ is mass minimizing in $\{ x_{n+1} > 0\}$, and has the form $T = \del [E] \llcorner \{ x_{n+1} > 0 \}$ for some set $E \subset \overline{\Omega} + \R v$. Then $T = 0$.
\end{lemma}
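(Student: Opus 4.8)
The plan is to prove $T=0$ by a barrier argument: we rotate a hyperplane about a fixed $(n-1)$-plane until it touches $\spt T$, invoke the strong maximum principle to conclude that $\spt T$ must then contain that \emph{entire} hyperplane inside $\{x_{n+1}>0\}$, and finally observe that no such hyperplane can be confined to $\overline{\Omega}+\R v$.

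First, if $\Omega=\emptyset$ then $E\subseteq\R v$ has zero $(n+1)$-volume and $T=0$; so assume $\Omega\ne\emptyset$, i.e.\ $\overline{\Omega}$ is a full-dimensional closed convex cone. The hypothesis that $\Omega$ is a proper subset of some half-space amounts to $\overline{\Omega}$ containing no half-space, so its dual cone $\overline{\Omega}^{*}=\{w\in\R^n:w\cdot y\ge0\ \forall y\in\overline{\Omega}\}$ is pointed and is not a single ray, whence $\dim\overline{\Omega}^{*}\ge2$; I pick linearly independent $w_1,w_2\in\overline{\Omega}^{*}$, so that $\overline{\Omega}\subseteq\{y\in\R^n:y\cdot w_1\ge0,\ y\cdot w_2\ge0\}$. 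Writing $v=v'+c\,e_{n+1}$ with $v'\in\R^n$, $c=v\cdot e_{n+1}>0$, a direct computation gives $\overline{\Omega}+\R v=\{(z,s)\in\R^{n+1}:z-(s/c)v'\in\overline{\Omega}\}$, whence $\spt T\subseteq\overline{E}\subseteq\overline{\Omega}+\R v\subseteq\mathcal{W}:=\{x\cdot\nu_1\ge0\}\cap\{x\cdot\nu_2\ge0\}$, where $\nu_i:=(w_i,-(v'\cdot w_i)/c)\in\R^{n+1}$. Since $w_1,w_2$ are independent so are $\nu_1,\nu_2$; hence $\mathcal{W}$ is a wedge of dihedral angle in $(0,\pi)$ with $(n-1)$-dimensional edge $\mathcal{A}:=\vecspan(\nu_1,\nu_2)^{\perp}$. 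Crucially $e_{n+1}\notin\vecspan(\nu_1,\nu_2)$ (the $\R^n$-components of $\nu_1,\nu_2$ are already independent), so the edge is not horizontal: $\mathcal{A}\not\subseteq\{x_{n+1}=0\}$. Finally recall $\spt T\subseteq\{x_{n+1}>0\}$, and $\del T=0$ there (since $T=\del[E]$ restricted to an open set).

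Now suppose $T\ne0$. An $n$-current supported in an $(n-1)$-plane vanishes, so $\spt T\setminus\mathcal{A}\ne\emptyset$. Let $\{P_\psi\}$ be the family of half-hyperplanes with common edge $\mathcal{A}$, indexed by the angle $\psi$ of the ray-direction of $P_\psi$ in the $2$-plane $\vecspan(\nu_1,\nu_2)$, so that $\mathcal{W}=\bigcup_{\psi\in[\psi_1,\psi_2]}P_\psi$ with $\psi_2-\psi_1<\pi$. Let $\psi^{*}\in[\psi_1,\psi_2]$ be the largest angle attained by a point of $\spt T\setminus\mathcal{A}$. Then $\spt T$ lies in the closed sub-wedge $\bigcup_{\psi\in[\psi_1,\psi^{*}]}P_\psi$, of dihedral angle $<\pi$, hence on one side of the full hyperplane $\Pi^{*}:=\vecspan(P_{\psi^{*}})$; and there is a point $p_0\in(\spt T\setminus\mathcal{A})\cap P_{\psi^{*}}$, so $x_{n+1}(p_0)>0$, and since $p_0\notin\mathcal{A}$ the hyperplane $\Pi^{*}$ is smooth near $p_0$. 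As $T$ is mass-minimizing and touches $\Pi^{*}$ from one side at $p_0$, the strong maximum principle \cite{SW} (or \cite{All} plus the elliptic maximum principle) shows $\spt T=\Pi^{*}$ near $p_0$, hence (constancy theorem, as $\del T=0$) $T$ equals an integer multiple of $[\Pi^{*}]$ there; real-analyticity of mass-minimizers and unique continuation then propagate this to $\spt T\supseteq\Pi^{*}\cap\{x_{n+1}>0\}$. But $\Pi^{*}$ contains the opposite half-hyperplane $P_{\psi^{*}+\pi}$, whose angle lies outside $[\psi_1,\psi_2]$, so $P_{\psi^{*}+\pi}\setminus\mathcal{A}$ is disjoint from $\mathcal{W}$; and since $\mathcal{A}$ is not horizontal, a short check shows $(P_{\psi^{*}+\pi}\setminus\mathcal{A})\cap\{x_{n+1}>0\}\ne\emptyset$. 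These are points of $\spt T$ lying outside $\mathcal{W}$, contradicting $\spt T\subseteq\mathcal{W}$. Hence $T=0$.

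The step requiring the most care is ensuring the extremal angle $\psi^{*}$ is actually \emph{attained}, at a contact point with $x_{n+1}>0$ — i.e.\ that $\spt T$ does not merely accumulate toward the extremal direction off at infinity, or down towards $\{x_{n+1}=0\}$. I expect to handle this, where needed, by first replacing $T$ by a tangent cone at infinity, which exists and is again mass-minimizing in $\{x_{n+1}>0\}$ of the same form by the density bound \eqref{eqn:T-bound} and monotonicity \eqref{eqn:T-mono} and whose support is dilation-invariant, together with a routine compactness argument; the auxiliary unique-continuation step is standard, since a hyperplane is a singularity-free mass-minimizer.
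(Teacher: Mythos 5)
Your overall strategy --- confine $\spt T$ to a wedge of opening angle $<\pi$ with a non-horizontal $(n-1)$-dimensional edge, slide a half-hyperplane about that edge until it touches $\spt T$, and invoke the strong maximum principle to force a full hyperplane into $\spt T$ that must exit the wedge --- is the same barrier argument the paper uses, and your dual-cone algebra, the independence of $\nu_1,\nu_2$, and the final contradiction are all sound. The gap is exactly the step you flag yourself: producing an actual contact point in the \emph{open} set $\{x_{n+1}>0\}$, where $T$ is mass-minimizing. Your proposed fix (pass to a tangent cone at infinity) does not close it, for two reasons. First, in this lemma $\del T$ is an uncontrolled current supported in $\{x_{n+1}=0\}$, so neither \eqref{eqn:T-bound} nor the monotonicity \eqref{eqn:T-mono} (which requires $\del T=[\R^{n-1}]$) is available, and a blow-down limit of $T$ need not be a cone. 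Second, and more seriously, even for a dilation-invariant support the angle $\psi$ about the edge $\mathcal{A}$ gives no lower bound on $x_{n+1}$: the supremum $\psi^*$ may be approached only along a sequence with $x_{n+1}\to 0^+$, so the limiting contact point can land on $\{x_{n+1}=0\}$, where $T$ is not stationary and neither \cite{SW} nor the Allard-plus-Hopf route applies. This is not a removable technicality: the whole content of the hypothesis is that $T$ is only known to be minimizing above height $0$ (its boundary lives at height $0$), so a touching point at height $0$ is genuinely unusable.

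The paper's proof is engineered precisely to avoid this. After normalizing $\Omega\subset\R^{n-1}\times(0,\infty)$ with $\overline{\Omega}\cap\R^{n-1}=K$ (the translational symmetries of $\Omega$), it maximizes the scale-invariant ratio $d(x,\R^n)/d(x,\R^{n-1})$, i.e.\ the elevation angle about the \emph{horizontal} edge $\R^{n-1}$. A maximizing sequence is translated along $K$ into $K^\perp$ and rescaled by $|x_i|$; the rescaled currents remain minimizing boundaries in $\{x_{n+1}>0\}$ (no conical structure of the limit is needed), the limit point $y\in\del B_1$ avoids the edge because $y\in K^\perp\cap(\overline{\Omega}+\R v)$ forces $d(y,\R^{n-1})>0$, and then $d(y,\R^n)=\alpha\, d(y,\R^{n-1})>0$ places the contact strictly inside $\{x_{n+1}>0\}$. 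In other words, positivity of the extremal quantity at the limit automatically pushes the contact point off $\{x_{n+1}=0\}$ --- a self-improving feature your $\psi$ lacks. To repair your version you would need to replace $\psi$ by such a quantity (or otherwise rule out contact at height $0$); as written, the attainment step is a genuine hole.
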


\begin{proof}
After a rotation/translation in $\R^n$ we can arrange so that if $K$ is the space of translational symmetry of $\Omega$ (i.e. so that $v \in K \iff \Omega + v =\Omega$), then $\Omega \subset \R^{n-1} \times (0, \infty)$ and $\overline{\Omega} \cap \R^{n-1} = K$.  Note that if $n = 2$ then $K = \{0\}$, and in general $K$ is at most $(n-2)$-dimensional.  Suppose $T \neq 0$.  Then
\[
\alpha = \sup_{x \in \spt T \setminus \R^{n-1}} \frac{d(x, \R^n)}{d(x, \R^{n-1})} > 0.
\]
Choose a sequence $x_i$ so that
\[
\frac{d(x_i, \R^n)}{d(x_i, \R^{n-1})} \to \alpha .
\]
After translating $T$ by an element of $K$, we can assume that $x_i \in K^\perp$.

Let $T_i = (\eta_{0, |x_i|})_\sharp T$, and $y_i = x_i/|x_i|$.  Since $T_i$ is a minimizing boundary in $\{x_{n+1} > 0\}$, we have bounds of the form $||T_i||(U) \leq c(U)$ for any $U \subset\subset \{ x_{n+1} > 0\}$, and with $c$ independent of $i$.  Therefore, after passing to a subsequence, we can find a $T' \in \cI_n(\{x_{n+1} > 0\})$ which is minimizing and has zero boundary, so that $T_i \to T'$ in $\{ x_{n+1} > 0 \}$ as both currents and measures.  Upper-semi-continuity of density implies $\spt T_i \to \spt T'$ in the local Hausdorff distance in $\R^{n+1} \setminus \R^n$, and $\spt T' \subset \overline{\Omega} + \R v$.

Passing to a further subsequence, we can assume $y_i \to y \in \del B_1 \cap K^\perp \cap (\overline{\Omega} + \R v)$.  Since $y \in K^\perp$, we have $d(y, \R^{n-1}) > 0$, and hence $d(y, \R^n) > 0$ also.  In particular $y \in \spt T'$.

By construction, and by the local Hausdorff convergence of supports, we have
\[
\alpha = \frac{d(y, \R^n)}{d(y, \R^{n-1})} = \sup_{z \in \spt T' \setminus \R^{n-1}} \frac{d(z, \R^n)}{ d(z, \R^{n-1})}.
\]
Therefore if $H$ is the hyperplane containing both $\R^{n-1}$ and $y$, we deduce that $\spt T'$ lies to one side of $H$, and touches $H$ at $y$.  By the strong maximum principle \cite{SW}, we deduce that $H \cap \{ x_{n+1} > 0 \} \subset \spt T' \cap \{ x_{n+1} > 0 \} \subset (\overline{\Omega} + \R v) \cap \{ x_{n+1} > 0\}$.  This is a contradiction, since $v \cdot e_{n+1} \neq 0$ and $\overline{\Omega} \neq \R^{n-1} \times [0, \infty)$.
\end{proof}

For Case 2, we use in part a similar barrier argument, but we additionally have to use very strongly the structure $T = \del [E]$, for $E$ contained in an $(n+1)$-dimensional slab.  This is to rule out $T$ looking like a vertical plane very far away from its boundary.  Of course if $E$ is not contained in a slab, this could occur, but when $E$ is sandwiched as well as $T$ this violates the mass-minimizing condition.
\begin{lemma}\label{lem:slab-bern}
Let $\Omega = \R^{n-1} \times (0, 1)$, and $v \in \R^{n+1}$ with $v \cdot e_{n+1} > 0$.  Suppose $T \in \cI_n(\R^{n+1} \cap \{ x_{n+1} > 0 \})$ is mass-minimizing in $\{ x_{n+1} > 0\}$, and has the form $T = \del [E] \llcorner \{ x_{n+1} > 0\}$ for some set $E \subset \overline{\Omega} + \R v$.  Then $T = 0$.
\end{lemma}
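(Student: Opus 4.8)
The plan is to show that the hypothesis $T=\del[E]$ with $E$ trapped in the slab $\overline\Omega+\R v$ forces $\|T\|$ to grow only like $r^{n-1}$ rather than $r^n$ -- the quantitative incarnation of ``$T$ cannot look like a vertical plane'' -- and then to finish with a short barrier argument.

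\emph{Setup and reductions.} Replacing $E$ by $E\cap\{x_{n+1}>0\}$ changes neither the hypotheses nor the conclusion, so I assume $E\subset\{x_{n+1}>0\}$; then $\del T=(\del\del[E])\llcorner\{x_{n+1}>0\}=0$. Since $v_{n+1}>0$, a direct computation gives $\overline\Omega+\R v=\{0\le g\le 1\}$ for the affine function $g(x)=x_n-(v_n/v_{n+1})x_{n+1}$, which is independent of $x_1,\dots,x_{n-1}$, has $|\nabla g|\ge 1$, and whose level sets are never parallel to $\{x_{n+1}=0\}$. In particular $\spt T\subset\{0\le g\le 1\}\cap\{x_{n+1}>0\}$.

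\emph{Step 1 (the mass estimate).} I claim $\|T\|(B_s(q))\le C(n)\,s^{n-1}$ whenever $\overline{B_s(q)}\subset\{x_{n+1}>0\}$ and $s\ge 1$. For a.e.\ $\sigma\in(0,s]$ the slice $\langle[E],|x-q|,\sigma\rangle$ is the multiplicity-one current carried by $E\cap\del B_\sigma(q)\subset \del B_\sigma(q)\cap\{0\le g\le 1\}$, and since $\del T=0$ its boundary equals, up to sign, $\del\big(T\llcorner B_\sigma(q)\big)$; hence $[E\cap\del B_\sigma(q)]-T\llcorner B_\sigma(q)$ (with the appropriate orientation) is an admissible cycle, and mass-minimality gives
\[
\|T\|(B_\sigma(q))\le\haus^n\big(E\cap\del B_\sigma(q)\big)\le\haus^n\big(\del B_\sigma(q)\cap\{0\le g\le 1\}\big)\le C(n)\,\sigma^{n-1},
\]
the final bound because a radius-$\sigma$ sphere meets a slab of bounded width in a thin band. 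Monotonicity of $\sigma\mapsto\|T\|(B_\sigma(q))$ then yields the claim for $s$ itself.

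\emph{Step 2 (confinement) and Step 3 (barrier).} Applying Step 1 with $s=q_{n+1}/2$ together with the monotonicity formula centred at $q$ (valid for radii $<q_{n+1}$) gives $\theta_T(q)\le\theta_T(q,q_{n+1}/2)\le C(n)/q_{n+1}$; since $\theta_T\ge 1$ at each point of $\spt T$, this forces $\spt T\subset\{x_{n+1}\le M\}$ for a dimensional constant $M$. Now suppose $T\ne 0$ and set $M^*=\sup_{\spt T}x_{n+1}\in(0,M]$. Because $\spt T$ lies in the ``plank'' $\{0\le g\le 1,\ 0<x_{n+1}\le M\}$, a sequence in $\spt T$ realizing this supremum is bounded in the $x_n,x_{n+1}$ coordinates and can escape only along $\R^{n-1}$; translating in $\R^{n-1}$ (a symmetry of $\Omega$, of the slab, and of $\{x_{n+1}>0\}$) and passing to a subsequential limit produces a mass-minimizer $T'$ in $\{x_{n+1}>0\}$ with $\spt T'\subset\{0\le g\le 1\}$ and $\sup_{\spt T'}x_{n+1}=M^*$ \emph{attained} at an interior point $p$. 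The strong maximum principle \cite{SW}, comparing $\spt T'$ with the minimal hyperplane $\{x_{n+1}=M^*\}$ that it touches from below at $p$, forces $\{x_{n+1}=M^*\}\subset\spt T'$; but $g$ is unbounded on that hyperplane, contradicting $\spt T'\subset\{0\le g\le 1\}$. Hence $T=0$.

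\emph{Where the difficulty lies.} The hard part is Step 1. A mass-minimizing boundary in a half-space need not have $(n-1)$-dimensional area growth -- a half-hyperplane through $\R^{n-1}$ does not, and indeed $\{x_2=0,\ x_3\ge 0\}\cup\{x_2=1,\ x_3\le 0\}$ shows the lemma fails without the hypothesis on $E$ -- so it is essential that $E$ is confined to a slab of \emph{finite} width, which is precisely what makes the spherical slices $E\cap\del B_\sigma(q)$ thin bands rather than near-hemispheres. A secondary technical point is that in Step 3 one must translate rather than dilate, so that the contact point $p$ stays at positive height and the limit varifold remains stationary there.
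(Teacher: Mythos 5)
Your proof is correct, and the way you rule out the possibility that $T$ looks like a vertical plane far from its boundary is genuinely different from the paper's. The paper argues softly: it sets $\alpha = \sup_{\spt T} x_{n+1}$ and, in the case $\alpha = \infty$, translates out to the maximizing sequence and then blows down, obtaining an entire minimizing boundary $T'' = \del [E'']$ with $E''$ contained in the $n$-dimensional set $\R^{n-1} + \R v$, whence $[E''] = 0$, $T'' = 0$, contradicting $0 \in \spt T''$. You instead prove the quantitative estimate $||T||(B_\sigma(q)) \leq C(n) \sigma^{n-1}$ by slicing $[E]$ and using the cycle $-T \llcorner B_\sigma(q) - \langle [E], |x-q|, \sigma\rangle$ as a competitor, whose mass is controlled because the spherical slice of $E$ lies in a band of width $\leq 1$ on $\del B_\sigma(q)$; combined with monotonicity and $\theta_T \geq 1$ on $\spt T$, this yields an explicit height bound $\spt T \subset \{ x_{n+1} \leq M(n) \}$ and removes the $\alpha = \infty$ case entirely. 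After that, your Step 3 coincides with the paper's treatment of $\alpha < \infty$: translate a maximizing sequence in $\R^{n-1}$, pass to a subsequential limit, touch the hyperplane $\{x_{n+1} = M^*\}$ from below, apply the Solomon--White strong maximum principle, and contradict the confinement $\spt T' \subset \{ 0 \leq g \leq 1 \}$. Your route costs an extra slicing computation but buys an effective bound and avoids a second compactness/blow-down; the paper's is shorter but purely qualitative. Two small remarks: the identity $\del T = (\del \del [E]) \llcorner \{x_{n+1} > 0\}$ is not literal, since $\del$ and $\llcorner$ do not commute --- what is true, and all you need, is that $\del T$ vanishes as a current in the open set $\{x_{n+1} > 0\}$, because any boundary mass of $\del[E] \llcorner \{x_{n+1} > 0\}$ sits on $\{x_{n+1} = 0\}$; and when $n = 1$ the band estimate degrades from $C\sigma^{0}$ to $C\sigma^{1/2}$, which is still $o(\sigma)$ and leaves Steps 2--3 unaffected.
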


\begin{proof}
Assume that $T \neq 0$.  Then
\[
\alpha = \sup_{x \in \spt T} d(x, \R^n) > 0.
\]
Choose a sequence $x_i \in \spt T$ with $d(x_i, \R^n) \to \alpha$.  After translating in the $\R^{n-1}$ direction we can assume $x_i = z_i + v r_i$ for $z_i \in \{ 0^{n-1}\} \times [0, 1]$ and $r_i \in (0, \infty)$.  We can also assume $z_i \to z \in [0, 1]$.

Let $T_i = (\eta_{x_i, 1})_\sharp T \equiv \del [ \eta_{x_i, 1}(E)] \llcorner \{ x_{n+1} > -d(x_i, \R^n)\}$.  Since each $T_i$ is a mass-minimizing boundary in $\{ x_{n+1} > - d(x_i, \R^n)\}$, for any $U \subset \subset \{ x_{n+1} > -\alpha\}$ we have bounds of the form $||T_i||(U) \leq c(U)$ for $c$ independent of $i$.  Passing to a subsequence, we can find a mass-minimizing $T' \in \cI_n( \{ x_{n+1} > -\alpha \})$ so that $T_i \to T'$ as both currents and measures in $\{ x_{n+1} > -\alpha\}$.  Since $0 \in \spt T_i$, $0 \in \spt T'$ by upper-semi-continuity of density.  Moreover, by taking the current limit of $[\eta_{x_i, 1}(E)]$ as well, $T'$ has the form $T' = \del[E'] \llcorner \{ x_{n+1} > -\alpha\}$ for $E' \subset (\overline{\Omega} - z) + \R v$.

If $\alpha < \infty$, then $\spt T'$ lies below the hyperplane $H = \{x_{n+1} = 0\}$, and touches $H$ at $0$, and $T'$ is mass-minimizing in a neighborhood of $H$ (since $\alpha > 0$).  The strict maximum principle implies $H \subset \spt T' \subset (\overline{\Omega} - z) + \R v$, which is a contradiction.

If $\alpha = \infty$, then $T' = \del [E']$ is a non-zero mass-minimizing boundary in $\R^{n+1}$.  Therefore we can find a sequence $r_i \to \infty$ so that the rescaled $T_i' = (\eta_{0, r_i})_\sharp T' \equiv \del [\eta_{0, r_i}(E')]$ converge as both currents and measures to a mass-minimizing boundary $T'' = \del[E''] \in \cI_n(\R^{n+1})$.  Since $0 \in \spt T'_i$, $0 \in \spt T''$ also.  But now $E'' \subset \R^{n-1} + \R v$ and so as $(n+1)$-currents $[E''] = 0$.  This is a contradiction.
\end{proof}

Case 1 follows from the below characterization of minimizing hypercones with linear bounary.  This fact is essentially a direct consequence of \cite{HS79}, but since our setting is much simpler, one can give a much easier proof.
\begin{lemma}\label{lem:plane-bern}
Let $\Omega = \R^{n-1} \times (0, \infty) \subset \R^n$, and $a \in \R$.  Suppose $T \in \cI_n(\R^{n+1})$ is a mass-minimizing cone in $\R^{n+1}$ of the form $T = \del [E] - [G]$, where $E \subset \R^{n+1}$ satisfies $E \cap \{ x_n < 0 \} = \{ x_{n+1} < a x_n , x_n < 0 \}$, and $[G] = [\{ x_{n+1} = a x_n, x_n < 0 \}]$ oriented with the upwards unit normal.  Then $T = [H]$ for some half-hyperplane $H$, and in particular $\theta_T(0) = 1/2$.
\end{lemma}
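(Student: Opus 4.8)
The plan is to exploit the structure $T=\del[E]-[G]$ in two elementary ways, then combine an induction on $n$ with a rotating‑barrier argument.

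First I would record the easy consequences. Since $\del[G]$ is, up to a sign $\sigma=\pm1$, the multiplicity‑one current $[\R^{n-1}]$ carried by the edge of the half‑hyperplane $G$, we get $\del T=-\del[G]=\sigma[\R^{n-1}]$. On $\{x_n<0\}$ the set $E$ coincides with the explicit half‑space $\{x_{n+1}<ax_n\}$, so $\del[E]=[G]$ there and hence $T\llcorner\{x_n<0\}=0$; as this region is open, $\spt T\subseteq\{x_n\ge0\}$. Moreover $\spt[G]\subseteq\{x_n<0\}$, so $T=\del[E]$ on $\{x_n\ge0\}$, and reduced boundaries have multiplicity one; together with the previous point this shows $T$ is a multiplicity‑one current. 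Finally, reflecting $|T|$ across $\R^{n-1}$ as in \eqref{eqn:T-refl} gives a stationary integral cone $V=|T|+\tau_\sharp|T|$ with $\theta_V(0)=2\theta_T(0)$; since $0\in\spt\del T\subseteq\spt V$, the density lower bound for stationary varifolds yields $\theta_V(0)\ge1$, i.e.\ $\theta_T(0)\ge\tfrac12$. It remains to prove $\theta_T(0)\le\tfrac12$ and that $T$ is a half‑hyperplane.

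I would argue these by induction on $n$. The base case $n=1$ asserts that a mass‑minimizing $1$‑cone in $\R^2$ whose boundary is a single multiplicity‑one point is a ray, a half‑line of density $\tfrac12$. For the inductive step fix $p\in\R^{n-1}\setminus\{0\}$ and let $T_p$ be the tangent cone of $T$ at $p$. Then $T_p$ is mass‑minimizing, is invariant under translation along $\R p$ (it is the blow‑up of a cone at a non‑vertex point), and inherits all the structural hypotheses; writing $T_p=[\R p]\times\widetilde T_p$, the factor $\widetilde T_p$ is a multiplicity‑one mass‑minimizing cone in $\R^n$ with linear boundary $[\R^{n-2}]$, supported in a half‑space and again of the form $\del[\,\cdot\,]-[\,\cdot\,]$ required by the hypotheses. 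By the inductive hypothesis $\widetilde T_p$, hence $T_p$, is a multiplicity‑one half‑hyperplane; in particular $\theta_T(p)=\tfrac12$ and $\spt T_p$ is a single half‑hyperplane through $\R^{n-1}$.

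Now the barrier argument. Since $\spt T\subseteq\{x_n\ge0\}$ and $T$ is a cone, if $\spt T$ were not itself a single half‑hyperplane through $\R^{n-1}$ one could rotate a half‑hyperplane bounded by $\R^{n-1}$, starting inside $\{x_n\le0\}$, towards $\{x_n>0\}$ until it first meets $\spt T$ at an interior point; there $\spt T$ lies to one side of the corresponding hyperplane, and the strong maximum principle for stationary varifolds \cite{SW} puts a piece of that hyperplane into $\spt T$, which propagates along the connected open half‑hyperplane and has multiplicity one by the reduced‑boundary structure. Performing this for the two extreme rotation angles produces two distinct multiplicity‑one half‑hyperplanes inside $\spt T$; restricting $T_p$ (for any $p\in\R^{n-1}\setminus\{0\}$) to these contradicts the fact, just established, that $\spt T_p$ is a single half‑hyperplane. (The degenerate possibility that first contact is only attained asymptotically near $\R^{n-1}$ is handled by first passing to such a $T_p$, whose structure is already known.) Hence $\spt T=H$ for one half‑hyperplane $H$, and the constancy theorem together with $\del T=\sigma[\R^{n-1}]$ forces $T=[H]$ with $\theta_T(0)=\tfrac12$. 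Alternatively, once $\theta_T(0)=\tfrac12$ is known one may finish at once by noting $\theta_V(0)=1$ and invoking Allard's regularity theorem \cite{All}, which forces $V$, and therefore $T$, to be a plane. The step I expect to be the main obstacle is precisely this barrier argument — setting up the rotating family of half‑hyperplanes, guaranteeing first contact at an interior point (directly, or after a preliminary blow‑up at a point of $\R^{n-1}$), and propagating the contact set to an entire half‑hyperplane of the correct multiplicity — while the remaining steps are routine bookkeeping with slicing, monotonicity, and the reduced‑boundary structure of $\del[E]$.
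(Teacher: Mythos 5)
Your overall strategy is the same as the paper's: induction on $n$ via splitting tangent cones at points of $\R^{n-1}\setminus\{0\}$, followed by a rotating half-hyperplane barrier closed with the Solomon--White strong maximum principle, with the $\del[E]-[G]$ structure supplying multiplicity one and excluding vertical configurations. Your endgame (rotating in both directions and contradicting the fact that each $T_p$ is a single half-hyperplane) is a legitimate variant of the paper's (which rotates one way, gets $H\subset\spt T$, and then decomposes $T=[H]+T'$ with $\|T\|=\|[H]\|+\|T'\|$ to kill the remainder).

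There is, however, a genuine gap exactly at the step you yourself flag as the main obstacle: the case where the extremal angle is attained only by a sequence $x_i\in\spt T$ converging to a point $x\in\R^{n-1}$. Your parenthetical fix --- ``pass to the tangent cone $T_x$, whose structure is already known'' --- does not close this case. If you rescale at $x$ by $r_i=|x_i-x|$, the rescaled points may collapse onto $\R^{n-1}$ (when $d(x_i,\R^{n-1})/|x_i-x|\to 0$), and if you instead rescale by $d(x_i,\R^{n-1})$ the points may escape to infinity; in either scenario knowing that $\spt T_x$ is a single half-hyperplane at some angle $\phi_x\le\theta$ does not force $\phi_x=\theta$, so you recover no interior contact point and the argument loops. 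This is an honest boundary phenomenon and needs a boundary tool. The paper's route is: from $\theta_T(p)=1/2$ for all $p\in\R^{n-1}\setminus\{0\}$, reflect as in \eqref{eqn:T-refl} to get a stationary varifold with density $1$ along $\R^{n-1}\setminus\{0\}$, apply Allard's theorem to conclude $T$ is a regular multiplicity-one surface-with-boundary near $\R^{n-1}\setminus\{0\}$, and then apply the \emph{Hopf boundary point lemma} to the angle-defect function at $x$: since the sup is approached at $x$, the normal derivative vanishes there, forcing $\spt T=H$ near $x$. You have all the ingredients for the regularity statement (you computed $\theta_T(p)=1/2$), but you never derive it, and the Hopf step is absent; without it the barrier argument is incomplete.

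Two smaller points. First, your base case is stated too glibly: a mass-minimizing $1$-cone in $\R^2$ with boundary $[\{0\}]$ need not a priori be a single ray (a vertical line with multiplicities $j$ and $j+1$ is minimizing with that boundary); your multiplicity-one observation together with the structure of $E$ on $\{x_1<0\}$ does rule this out, but you should say so --- the paper does this explicitly. Second, the ``alternative'' finish via $\theta_T(0)=1/2$ plus Allard is circular as written, since $\theta_T(0)=1/2$ is only obtained after $T=[H]$ is proved; it cannot replace the barrier argument.
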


\begin{proof}
We induct on $n$.  If $n = 1$, then $T = \sum_{i=1}^N [l_i]$ is a finite union of rays (with orientation) from the origin contained in $[0, \infty) \times \R$.  Since $\del T = [\{0\}]$ and $T$ is mass-minimizing, a standard comparison argument implies either $N = 1$ or each $l_i \subset \{0\} \times \R$.  If we had $\spt T = \{ 0\} \times \R$, then we would have $\spt \del [E] = \{ x_{2} = ax_1, x_1 < 0 \} \cup \{ x_1 = 0 \}$, which is impossible.  So we must have $N = 1$, which proves the $n = 1$ case.

Take $n \geq 2$, and suppose by induction Lemma \ref{lem:plane-bern} holds for any $n' \leq n-1$ in place of $n$.  We claim that $T$ is regular and multiplicity-one in a neighborhood of $\R^{n-1} \setminus \{0\}$.  To see this, take any $x \in \R^{n-1} \setminus \{0\}$, and then by \eqref{eqn:T-bound}, \eqref{eqn:T-mono}, and compactness for mass-minimizing currents we can find a mass-minimizing tangent cone $T' \in \cI_n(\R^{n+1})$ to $T$ at $x$, satisfying $\del T' = [\R^{n-1}]$, $\theta_{T'}(0) = \theta_T(x)$, and $T' = \del[E'] - [G]$.

Since $x \neq 0$, \eqref{eqn:T-mono} implies we can write $T' = [\R x] \times T''$ for $T''$ a mass-minimizing cone in $(\R x)^\perp \cong \R^n$ satisfying the hypotheses of Lemma \ref{lem:plane-bern} with $n-1$ in place of $n$.  By induction we have $\theta_{T''}(0) = 1/2$, and hence $\theta_T(x) = 1/2$ for every $x \in \R^{n-1} \setminus \{0\}$.  By reflecting $T$ about $\R^{n-1}$ as in \eqref{eqn:T-refl}, we obtain a stationary integral varifold $V$ in $\R^{n+1}$, satisfying $\theta_V(x, 0) = 1$ for every $x \in \R^{n-1} \setminus \{0\}$.  Allard's theorem \cite{All} implies $V$ is regular near $\R^{n-1} \setminus \{0\}$, and hence $T$ is regular near $\R^{n-1} \setminus \{0\}$ also.  This proves our claim.

Now set
\begin{equation}\label{eqn:theta}
\theta = \sup_{x \in \spt T \setminus \R^{n-1}} \arcsin\left( \frac{x \cdot e_{n+1}}{d(x, \R^{n-1})}\right),
\end{equation}
where $\arcsin : [-1, 1] \to [-\pi/2, \pi/2]$.  If $\theta = -\pi/2$, then $\spt T \subset H = \{ x_{n+1} < 0, x_n = 0 \}$, and so by our hypothesis on $T$ and $E$ we have $T = [H]$.

Suppose $\theta > -\pi/2$, and let $H$ be the half-plane $\R^{n-1} \times \{ t \cos \theta e_n + t \sin \theta e_{n+1} : t \in (0, \infty) \}$. By our choice of $\theta$ and by our assumption on $T$, $\spt T$ lies inside the region bounded by $H$ and $\{ x_{n+1} \leq 0, x_n = 0 \}$.  Choose a sequence $x_i \in \spt T \cap \del B_1 \setminus \R^{n-1}$ maximimizing \eqref{eqn:theta} (recalling $\spt T$ is dilation-invariant), and WLOG assume $x_i \to x \in \spt T \cap \del B_1 \cap H$.  If $x \not \in \R^{n-1}$ the strong maximum principle \cite{SW} implies $H \subset \spt T$.  Otherwise, if $x \in \R^{n-1}$, then since $T$ is regular near $\R^{n-1} \setminus \{0\}$ the Hopf lemma implies $H = \spt T$ near $x$, and hence $H \subset \spt T$ also.

Endow $[H]$ with the correct orientation so that $\del [H] = -\del [G] = \del T$.  Since $T$ is integral and multiplicity-one, from the standard monotonicity formula we can write $T = [H] + T'$, $||T|| = ||[H]|| + ||T'||$, where $T'$ is a mass-minimizing cone in $\R^{n+1} \setminus \R^{n-1}$ with $\del T' = 0$.  Since $T$ is regular (and multiplicity-one) near $\R^{n-1} \setminus \{0\}$, we have $T = [H]$ in a neighborhood $U$ of $\R^{n-1} \setminus \{0\}$.  We deduce that the varifold $|T'|$ is a stationary integral varifold cone in $\R^{n+1}$ which is supported in a half-space $\R^{n-1} \times [0, \infty) \times \R$, and hence either $T' = 0$ or $\spt T' = \R^{n-1} \times \{0\} \times \R$.  But $||T'||(U) = 0$, and so the latter is impossible.  We deduce that $T = [H]$.
\end{proof}

\begin{proof}[Proof of Theorem \ref{thm:main2}]
First assume that $\Omega$ is a half-space.  Then after a rotation, translation in $\R^n$, we can assume $\Omega = \R^{n-1} \times (0, \infty)$.  After a further rotation in $\R^{n-1} \times \{0\} \times \R$, we can assume that $l(x) = a x_n$ for some $a \in \R$, and $E \cap \{ x_n < 0 \} = \{ x_{n+1} < a x_n, x_n < 0 \}$, and (hence) $T = \del [E] - [\{ x_{n+1} = ax_n, x_n < 0\}]$.

Choose any sequence of radii $r_i \to 0$, and let $T_i' = (\eta_{0, r_i})_\sharp T$.  By \eqref{eqn:T-bound}, \eqref{eqn:T-mono}, and compactness of mass-minimizing integral currents, we can pass to a subsequence and get convergence $T_i' \to T'$ as both currents and measures, where $T' \in \cI_n(\R^{n+1})$ is a mass-minimizing cone.  Moreover, we continue to have $T' = \del [E'] - [G]$, for some open set $E'$ satisfying $E' \cap \{ x_n < 0 \} = E \cap \{ x_n < 0 \}$.  Lemma \ref{lem:plane-bern} implies that $\theta_{T'}(0) = 1/2$, and hence $\theta_T(0) = 1/2$ also.

We can apply the same reasoning to a sequence $r_i \to \infty$ to deduce $\theta_T(0, \infty) = 1/2$ also, and therefore $T$ is a cone.  We get that $T = T' = [H]$ for some half-hyperspace $H$.  This proves the first case.

Assume now $\Omega$ is the slab trapped between two hyperplanes.  Let $R$ be a composition of rotations, dilations, and translations of $\R^{n+1}$ which takes the subgraph $\{ x_{n+1} < l(x') : x' \in \R^n \}$ to $\{ x_{n+1} < 0\}$, and takes $\graph(l|_{\Omega})$ to the slab $\Omega' = \R^{n-1} \times (0, 1)$.  Let $v = R(e_{n+1})$, $T' = R_\sharp T$, $E' = R(E)$.  Then $v \cdot e_{n+1} > 0$, $R(\Omega \times \R) = \Omega' + \R v$, $E' \setminus (\overline{\Omega'} + \R v) = \{ x_{n+1} < 0 \} \setminus ( \overline{\Omega'} + \R v)$, and $T' = \del[E'] - [\R^{n-1} \times ((-\infty, 0) \cup (1, \infty))]$.

Note that $T' \llcorner \{ x_{n+1} > 0 \} = \del [E'] \llcorner \{ x_{n+1} > 0 \}$ and $E' \cap \{ x_{n+1} > 0\} \subset (\overline{\Omega'} + \R v)$.  Therefore we can apply Lemma \ref{lem:slab-bern} to deduce $T' \llcorner \{ x_{n+1} > 0 \} = 0$.  By replacing $E'$ with $\R^{n+1} \setminus E'$ and reflecting around $\R^n$, we can again apply Lemma \ref{lem:slab-bern} to get $T' \llcorner \{ x_{n+1} < 0 \} = 0$.  So $\spt T' \subset \R^n$, and so we must have $E' = \{ x_{n+1} < 0\}$.  This proves the second case.

Assume finally that $\Omega$ is neither a half-space or a slab.  Then there must be two points in $\del \Omega$ with non-parallel supporting hyperplanes.  It follows that there is an open convex cone $\Omega'$, which is not a half-space, so that after a translation in $\R^n$ we have $\Omega \subset \Omega'$.

We can apply a similar rotation/translation/dilation $R$ as in Case 2, to rotate $\graph(l|_{\Omega'})$ to some convex cone $\Omega'' \subset \R^n$ (which is again is not a half-space).  If $T'' = R_\sharp T$, then arguing as in Case 2 except with Lemma \ref{lem:convex-bern} in place of Lemma \ref{lem:slab-bern}, we get that $\spt T'' \subset \R^n$, and hence $T = [\graph(l|_\Omega)]$.  This completes Case 3, and the proof of Theorem \ref{thm:main2}.
\end{proof}

\section*{Acknowledgements}
We thank Jintian Zhu for pointing out an error in our original draft, and to the referee for several comments and suggetions that greatly improved this manuscript.  Z. W. is grateful to the China Scholarship Council for supporting his visiting study at University of Notre Dame, and the Department of Mathematics at University of Notre Dame for its hospitality.

\end{document}